\newtheorem{corollary}{Corollary}
\newtheorem{lemma}{Lemma}
\newtheorem{proposition}{Proposition}
\newcommand{\R}{\ensuremath{\mathbb{R}}}
\newcommand{\T}{\ensuremath{\mathbb{T}}}
\newcommand{\eps}{\varepsilon}
\newcommand{\de}{\delta}
\title{A remark on the onset of resonance overlap}%
\author{Jacques Fejoz\thanks{CEREMADE/Universit{\'e} Paris-Dauphine \& IMCCE/Observatoire de Paris, CNRS, PSL Research University, Paris, France, \url{jacques.fejoz@psl.eu}}, \enspace Marcel Guardia\thanks{Departament de Matem\`atiques i Inform\`atica, Universitat de Barcelona, Gran Via, 585, 08007 Barcelona, Spain and Centre de Recerca Matem\`atica, Edifici C, Campus Bellaterra, 08193 Bellaterra, Spain, \url{guardia@ub.edu}}}
\begin{document}

\maketitle

\begin{abstract}
    Chirikov's celebrated criterion of resonance overlap has been widely used in celestial mechanics and Hamiltonian dynamics to detect global instability, but is rarely rigourous. We introduce two simple Hamiltonian systems, each depending on two parameters measuring respectively the distance to resonance overlap and non-integrability. Within some thin region of the parameter plane, classical perturbation theory shows the existence of global instability and symbolic dynamics, thus illustrating Chirikov's criterion.
\end{abstract}

Keywords: Hamiltonian system, instability, resonance overlap, non-integrability, Chirikov's criterion, symbolic dynamics

MSC2010 Classification: 37J40

\section{Heuristic introduction -- resonance overlapping}

Let $H : \T \times \R \times \T \to \R$ be a time-dependent Hamiltonian of class
$\mathcal{C}^\infty$, $2\pi$-periodic in time, of the form
\begin{equation}\label{def:H0:00}
  H(x,y,t) = H_0(y) + \eps F(x,y,t).
\end{equation}
When $\eps=0$, the time-$2\pi$ map $\phi$ of the flow of $H$ is integrable and the level curves of the coordinate $y$ are all invariant. Curves whose rotation number $H_0'(y)$ is rational or have good rational approximations disappear for generic Fourier coefficients of $F$, as Poincar{\'e} noticed~\cite{Poincare:1892}. In place of some of those resonant curves, periodic orbits originate, usually by elliptic/hyperbolic pairs.  (More generally, non-smooth invariant graphs known as Aubry-Mather sets can be found generically as the support of minimizing measures.) For systems of one and a half degree of freedom, like \eqref{def:H0:00} (or two degrees of freedom), elliptic orbits are surrounded by elliptic "eyes" (see \cite{Arnold:1989,Lichtenberg:1992, Meiss:2007} and references therein), where some kind of stability prevails over long time intervals (see \cite{Bounemoura:2015}). Simultaneously, as KAM theory proves, a positive Lebesgue measure of Diophantine invariant curves persist~\cite{Moser:1966a, Moser:1966}. As $\eps$ increases, more invariant curves disappear. (Some new invariant curves also show up, although these ones are harder to detect.) Persisting invariant curves are obstructions to large deviations in the $y$ direction. Note that, in higher dimension, Lagrangian invariant tori do not separate the phase space anymore and allow the dynamics for slow, non-local instability, called \emph{Arnold diffusion}~\cite{Arnold:1964, Chirikov:1979}. Arnold diffusion is notoriously difficult to show.

\bigskip As $\eps$ keeps increasing, the seeming sizes of resonant eyes grow. As long as the separatrices of two resonances are well apart, invariant curves separating the two zones confine orbits on one side or the other, and the system behaves as if the two resonances did not interact: in each zone, the dynamics is reasonably described by an integrable approximation retaining only the harmonics responsible for the opening of the relevant eye. Chirikov has conjectured that an orbit will start moving between two resonance eyes in a chaotic and unpredictable manner ``as soon as these unperturbed resonances overlap''~\cite{Chirikov:1960, Chirikov:1979}. For a modern reference, with applications to celestial mechanics, see Morbidelli's book~\cite[Chap.~6 and Section~9.2 in particular]{Morbidelli:2002}. Indeed, as soon as the separatrices of the two resonances get close to each other, the dynamics is no more described by two adjacent one-resonance integrable models and, as Morbidelli puts it, ``an initial condition in the overlapping region does not know which resonance it belongs to, and hesitates about which guiding trajectory it should follow". The criterion has been used for magnetically confined plasmas (as in Chirikov's initial work or Escande's review \cite{Escande:2016}), the Solar System (e.g. \cite{Morbidelli:1996, Morbidelli:2002, Nesvorny:1998, Petit:2017}), space debris \cite{Celletti:2017}, transport and turbulence in fluid mechanics~\cite{Castillo:1993}, as well as particle dynamics in accelerators, microwave ionization of Rydberg atoms, etc. (see~\cite{Lichtenberg:1992} and references therein). 

Defining the closeness of two resonance eyes, or their overlap, is not a simple matter, since generically separatrices split and thus do not precisely circumscribe an "eye". Physicists speak of a "stochastic layer" at the border, but little is really known about dynamics in this layer, apart from the horseshoe (a set of zero-measure) given by the Birkhoff-Smale theorem~\cite{Moser:1973}.
Moreover, there is a whole web of resonances, and, for each resonance, there are infinitely many ways to choose integrable approximations describing the opening of the corresponding eye. All this makes Chirikov's criterion imprecise. For a further analysis of why Chirikov's criterion fails in general, see for example~\cite{Benest:1995,Chandre:2002,Meiss:2007}.

\bigskip Key to instability is the destruction of invariant curves. The precise mechanism remains mysterious, despite extensive efforts (e.g. \cite{Forni:1996, Mather:1988}). One  attempt to describe whether invariant curves persist or not, which has been quite successful for practical purposes, is Greene's criterion, which analyzes the stability of accumulating periodic orbits. This criterion has been partly justified~\cite{Delshams:2000, Falcolini:1992, MacKay:1992}. Renormalization should also be an important tool for the full picture~\cite{Chandre:2002}. 

\bigskip We will not address this difficult issue directly, but rather aim at illustrating Chirikov's criterion on a simple, rigorous example. Consider the Hamiltonian 
\begin{equation}
  \label{def:hamiltonian1}%
  h_{\eps,\mu}(x,y,t)= \frac{y^2}{2}-\frac{y^3}{3}+\eps F(x,y,t;\mu); \quad F(x,y,t;\mu)=-\frac{1}{12}\cos x + \mu f(x,y,t)
\end{equation}
on $\T \times \R\times\T$, where $\eps$ and $\mu$ are real parameters and  $f(x,y,t)$ is a $\mathcal C^\infty$ time-periodic perturbation.

We denote by
\begin{equation}\label{def:H0}
  h_\eps(x,y) = \frac{y^2}{2} - \frac{y^3}{3} - \frac{\eps}{12} \cos x 
\end{equation}
the Hamiltonian $h_{\eps,\mu}$ with $\mu=0$.

The use of two parameters is reminiscent of Arnold's approach to Arnold Diffusion in his seminal paper \cite{Arnold:1964} where he separates the average with respect to all angles but $x$ (which he takes bigger, of size $\eps$) from the other harmonics (of size $\eps\mu$).

For $\mu=0$, the Hamiltonian \eqref{def:H0} can be  viewed as a modification of the twist Hamiltonian $y^2/2$ in the class of ``classical'' Hamiltonians (sums of a kinetic part depending only on $y$ and a potential part depending only on $x$ and $t$) involving only the lowest degree term in $y$ and lowest order harmonic in the average $\cos x$ creating two resonance eyes close to each other. In this sense, $h_{\eps,\mu}$ is a simple but somewhat general model family eligible to Chirikov's criterion of resonance overlapping (although in the parameter space or in the space of series coefficients the regime we consider is very specific).

As a variant we  also consider the doubly periodic Hamiltonian 
\begin{equation}
  \label{def:hamiltonian2}%
  \tilde h_{\eps,\mu}(x,y,t)= \cos y+ \eps F(x,y,t;\mu); \quad F(x,y,t;\mu)=- \cos x + \mu f(x,y,t)
\end{equation}
on $\T^3$, for which the instability is similar locally, but also more global due to the double periodicity. As before, we denote its first order by
\begin{equation}
  \label{def:H0:per}%
    \tilde h_\eps(x,y) = \cos y-\eps \cos x.
\end{equation}

Interestingly, the Hamiltonian $h_0$ (cubic in the actions) has a twistless curve (where the unperturbed frequency map $y \mapsto y(1-y)$ has a fold singularity). Greene's criterion has been applied to this twistless curve in~\cite{Castillo:1996}. Hamiltonians similar to $  \tilde h_\eps$ have been studied notably by Zaslavsky in~\cite{Zaslavsky:1991}, as examples displaying ``stochastic webs'' with spatial patterns. The Hamiltonian $  \tilde h_\eps$ is also a minimalistic Hamiltonian subcase of the so-called Arnold-Beltrami-Childress (ABC) flow, for which we refer to~\cite{Zhao:1993}. This later work, to which the present paper is closely related, uses Melnikov theory for proving the existence of chaotic solutions.

\begin{figure}[t]
  \centering
  \includegraphics[scale=1]{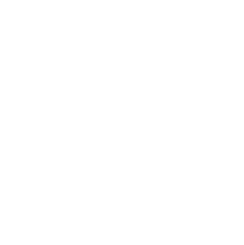}
  \includegraphics[scale=1]{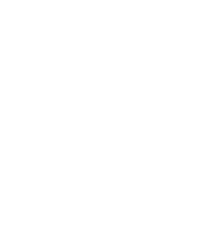}
  \includegraphics[scale=1]{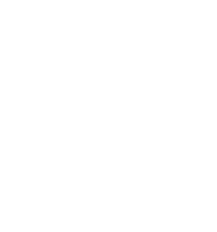}
  \caption{Level curves of $h_\eps$ for $\eps=\frac{1}{2}$, $1$, $\frac{3}{2}$}
  \label{fig:h0}
\end{figure}

\begin{figure}[t]
  \centering
  \includegraphics[scale=1]{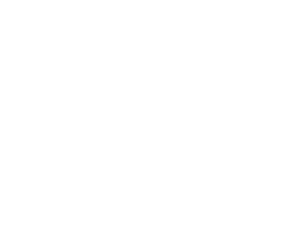}
  \includegraphics[scale=1]{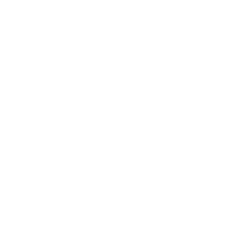}
  \includegraphics[scale=1]{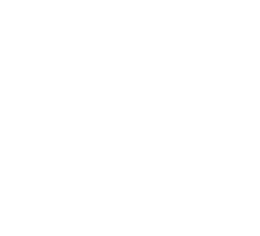}
  \caption{Level curves of $  \tilde h_\eps$ for $\eps=\frac{1}{2}$, $1$, $\frac{3}{2}$} 
  \label{fig:h0'}
\end{figure}

\section{An illustration of the overlapping principle}

We will now describe the Hamiltonians \eqref{def:hamiltonian1} and \eqref{def:hamiltonian2}, where one can quite explicitly see the transition from complete integrability to resonance overlapping. The key point in these examples is that the two parameters controlling the appearance of (and distance between) resonance eyes and the non-integrability are decoupled. 

The same overlapping of resonances behavior would take place if $\cos x$ in \eqref{def:H0} and \eqref{def:H0:per} would be replaced by any potential $V(x)$ with unique non-degenerate maximum and minimum, which is a generic condition. We state the results just for the models \eqref{def:H0} and \eqref{def:H0:per} for the sake of simplicity.

The phase portraits of Hamiltonians $h_\eps$ and $  \tilde h_\eps$ defined above are shown in Figs. \ref{fig:h0} and \ref{fig:h0'} for different values of the parameter $\eps$. The interesting bifurcation value for us will be $\eps=1$, so henceforth we will assume that $\eps>0$. Both Hamiltonians \eqref{def:H0} and \eqref{def:H0:per} are integrable but do not have global action-angle coordinates (as they would for $\eps=0$) --namely, they have separatrices which create ``eyes'' of width  $O(\sqrt{\eps})$ in the $(x,y)$-coordinates. Also, they possess hyperbolic critical points at 
\begin{equation}
\label{def:saddles}
\begin{cases}
 ((2k+1)\pi, 0)\quad\text{ and }\quad (2k\pi, 1) \qquad &\text{for }h_\eps\\
  (2k\pi, 2k'\pi)\quad\text{ and }\quad ((2k+1)\pi, (2k'+1)\pi) \qquad &\text{for }  \tilde h_\eps
\end{cases}
\end{equation}
for every $k,k'\in\mathbb{Z}$. The two Hamiltonians undergo a bifurcation (sometimes called a \emph{heteroclinic reconnection}) when the energy levels of the two families of hyperbolic points coincide, namely for $\eps=1$.

For $0<\eps<1$, the separatrices attached to the hyperbolic critical points are graphs over the $x$ direction. This implies that there are invariant curves separating the saddles having different $y$ component.  At the bifurcation value, the net of separatrices changes its topology by creating heteroclinic connections between  saddles with different $y$ component. In particular, all smooth invariant curves separating the two resonance eyes of  $h_\eps$ may break down, and the Hamiltonian $  \tilde h_\eps$ might not have any invariant smooth curve over the $x$- or $y$-axes.

Consider now the perturbed Hamiltonians \eqref{def:hamiltonian1} and \eqref{def:hamiltonian2} with $0<\mu\ll 1$.
Note that now the parameter $\eps$ measures the size of resonant eyes, while
$\mu$ measures the distance to the integrable approximations $h_\eps$ and $  \tilde h_\eps$. 
Note that those Hamiltonians are of the form \eqref{def:H0:00}.

The parameter $\varepsilon$ measures hyperbolicity (i.e. the size of eyes of resonance), while $\varepsilon\mu$ measures the distance to some integrability. The real, difficult case for Chirikov's criterion is when $\mu=1$, i.e. when non-integrable terms are of the same size as the integrable terms responsible for the eyes of resonance.  The goal of the present work is to illustrate Chirikov's criterion within a thin region in the parameter plane, defined by $ |\eps-1|\ll 1$ and $|\mu|\ll 1$.\footnote{It is a matter of definition whether our analysis in the regime where $|\mu| \ll 1$ may indeed be called ``Chirikov's criterion'': on the one hand, this regime is eligible to being analysed with classical, perturbative tools, as we show below and contrary to the standard domain of application of the criterion; on the other hand, it does correspond to the general idea of overlapping eyes of resonance.} Our main result is the following.

\begin{proposition}\label{prop:VerticalConnection}
  There exist constants $C_1, C_2, \mu_0>0$ such that for any
  $\mu\in (0,\mu_0)$,
  \begin{enumerate}
  \item for $0<\eps<C_1<1$, heteroclinic connections between the periodic 
orbits $\mu$-close to the  saddles   \eqref{def:saddles} with different 
$y$-component is not possible.
  \item for $1-C_2\mu<\eps\leq 2$, $h_{\eps,\mu}$ possesses transversal heteroclinic
    connections between the same periodic orbits as there are for the
     case $\eps=1$ and $\mu=0$.
  \end{enumerate}
\end{proposition}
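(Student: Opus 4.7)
The plan is to handle the two parts of the proposition by qualitatively opposite arguments. For part~(1), my strategy is a KAM-type obstruction. When $\eps < C_1 < 1$, the two families of saddles $((2k+1)\pi,0)$ and $(2k\pi,1)$ of $h_\eps$ sit at distinct energies $\eps/12$ and $1/6-\eps/12$, separated by an energy gap $(1-\eps)/6 \geq (1-C_1)/6>0$. On the open cylindrical annulus strictly between these two saddle families, the integrable Hamiltonian $h_\eps$ admits action-angle coordinates with frequency essentially $y(1-y)$, whose derivative $1-2y$ is of order unity away from $y=1/2$, yielding a quantitative twist bound depending only on $C_1$ (the mention of a twistless curve in the introduction means one should avoid a neighbourhood of $y=1/2$, but this is harmless since a barrier on either side already suffices). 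A classical KAM invariant-curve theorem applied to the time-$2\pi$ map of $h_{\eps,\mu}$ then produces, for $\mu$ below a threshold depending on $C_1$ and $f$, a Cantor set of homotopically nontrivial invariant curves in the annulus. Each such curve separates $\T\times\R$ into two invariant regions, one containing each saddle family, and therefore forbids any connecting orbit between them.

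For part~(2) I would use Poincar\'e--Melnikov theory near the bifurcation value $\eps=1$. At $(\eps,\mu)=(1,0)$ the level set $\{h_1=1/12\}$ explicitly contains a heteroclinic orbit $(x_0(t),y_0(t))$ from $(\pi,0)$ to $(2\pi,1)\equiv(0,1)$, characterised by the algebraic relation $3y^2-2y^3=(1+\cos x)/2$. For $(\eps,\mu)$ close to $(1,0)$, standard hyperbolic persistence yields two $2\pi$-periodic orbits near the unperturbed saddles with $C^\infty$-close local invariant manifolds. Writing $h_{\eps,\mu}=h_1+(\eps-1)\,g+\eps\mu f$ with $g(x)=-\cos(x)/12$, the natural distance function $\Delta(\theta)$ measuring the signed energy gap between the unstable and stable manifolds along a section transverse to the heteroclinic admits the leading-order expansion
\begin{equation*}
  \Delta(\theta)=(\eps-1)\,M_g+\mu\,M_f(\theta)+o(|\eps-1|+\mu),
\end{equation*}
where $M_g=g(\pi,0)-g(0,1)=1/6$ is the autonomous splitting coming from the $\eps$-detuning, and
\begin{equation*}
  M_f(\theta)=\int_{-\infty}^{\infty}\{f,h_1\}\bigl(x_0(t),y_0(t),t+\theta\bigr)\,dt
\end{equation*}
is the usual time-dependent Melnikov integral along the unperturbed heteroclinic.

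For generic $f$ the function $M_f(\theta)$ is a nontrivial smooth $2\pi$-periodic function with strictly positive oscillation amplitude $\kappa(f)$, a condition I would certify by exhibiting one nonzero Fourier coefficient through residue-style computations that exploit the explicit parametrisation of $(x_0,y_0)$. Once $\kappa(f)>0$, the equation $\Delta(\theta)=0$ has simple zeros whenever the constant offset $(\eps-1)\,M_g$ falls strictly inside the oscillation range of $\mu\,M_f(\theta)$, i.e.\ whenever $|\eps-1|<C_2\,\mu$ with $C_2=6\kappa(f)$; the implicit function theorem lifts these to transverse heteroclinic intersections of the full perturbed manifolds, proving the statement in a symmetric window around $\eps=1$. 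The main obstacle is extending this conclusion through the entire reconnection regime $\eps\in[1,2]$: past $\eps=1$ the unperturbed separatrices of the two saddles no longer coincide and reconnect into homoclinic loops, so one must reinterpret $\Delta$ as a global splitting function between the deformed manifolds, using the explicit integrability of $h_\eps$ on the whole range $\eps\leq 2$ to analytically continue $\Delta(\theta)$ in $\eps$ and to verify that the oscillating $\mu$-part continues to dominate the autonomous $(\eps-1)$-part of the defect along suitable branches of the reconnected separatrix structure.
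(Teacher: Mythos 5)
Your proposal follows the same two-pronged strategy as the paper: for part~(1), KAM invariant curves of the time-$2\pi$ map acting as topological barriers between the two saddle families, and for part~(2), Melnikov theory based at the reconnection value $\eps=1$, treating both the detuning $\eps-1$ and $\mu$ as small parameters. Part~(1) is the paper's argument with more quantitative detail (the paper likewise notes that the non-degeneracy fails only on one curve, the twistless one). For part~(2), your decomposition $h_{\eps,\mu}=h_1+(\eps-1)g+\eps\mu f$ and the splitting expansion $\Delta(\theta)=(\eps-1)M_g+\mu M_f(\theta)+o(|\eps-1|+\mu)$ is exactly the computation behind the paper's Lemma~\ref{lemma:Melnikov2} (which writes $\eps=1+\de$ and perturbs $h_1$ by $-\de\cos x+\mu(1+\de)f$), made more explicit; in particular you correctly identify that zeros of $\Delta$ exist precisely when the constant autonomous offset $(\eps-1)M_g$ is dominated by the oscillation of the time-periodic Melnikov integral, which is where the threshold $1-C_2\mu<\eps$ comes from. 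The one substantive divergence is the range of $\eps$: your argument honestly yields the symmetric window $|\eps-1|<C_2\mu$ and flags the extension to all of $(1,2]$ as an obstacle, whereas the paper simply declares statement~2 a direct consequence of Lemma~\ref{lemma:Melnikov2}, which is itself stated only for $\eps\in[1-\de_0,1+\de_0]$; you have therefore not missed any argument that the paper actually supplies. Be warned, however, that your proposed fix (analytic continuation of $\Delta$ through the reconnection regime) should not be expected to succeed: for $\eps>1$ with $\eps-1\gg\mu$, the level sets of $h_\eps$ at energies strictly between the two saddle values $\eps/12$ and $1/6-\eps/12$ are rotational (non-contractible) curves on the cylinder which still separate the saddle $(\pi,0)$ from the saddle $(0,1)$ (at $x=0$ they lie at negative $y$, at $x=\pi$ above $y=3/2$), so the same KAM obstruction you use in part~(1) reappears there, and the heteroclinic connections of statement~2 can only be expected in a window $|\eps-1|=O(\mu)$ around the reconnection value.
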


This result can be seen as the process of overlapping resonances in a non-integrable Hamiltonian system. In Regime 1 KAM curves prevent overlapping between the considered resonances: dynamics is confined between the invariant curves. In contrast, in Regime 2 all KAM curves break down and there is overlapping between resonances.

Note that here we are only considering the strongest resonances, corresponding to $\dot x=0$.  Certainly, the Hamiltonians $h_{\eps,\mu}$ and $  \tilde h_{\eps,\mu}$ possess many more at $\dot x\in\mathbb{Q}$ but they are much weaker (so we would need $\mu$ much smaller in order to apply the same arguments).

\bigskip We will now prove the proposition. For $\mu$ small enough, $h_{\eps,\mu}$ and $\tilde h_{\eps,\mu}$ have hyperbolic periodic orbits $\mu$-close to the saddles of $h_{\eps}$ and $ \tilde h_{\eps}$ respectively for any $\eps\in(0,2]$. Melnikov Theory \cite{Melnikov63} implies that the separatrices of $h_{\eps}$ and $ \tilde h_{\eps}$ usually break down.

We will use the following two lemmas. 

\begin{lemma}\label{lemma:Melnikov1}
  Fix $\eps>0$. For a generic $f$ there exists $\mu_0>0$ such that for all $\mu\in (0,\mu_0)$ all the separatrices of the Hamiltonians $h_\eps$ and $  \tilde h_\eps$ break down and the resulting invariant manifolds intersect transversally.
\end{lemma}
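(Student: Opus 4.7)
The plan is to apply classical Melnikov theory to $h_{\eps,\mu}=h_\eps+\eps\mu f$, viewed for fixed $\eps>0$ as a $\mu$-small, $2\pi$-periodic-in-time perturbation of the integrable Hamiltonian $h_\eps$; the argument for $\tilde h_{\eps,\mu}$ is identical.

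First I would upgrade the hyperbolic fixed points of $h_\eps$ (respectively $\tilde h_\eps$) listed in \eqref{def:saddles} to hyperbolic $2\pi$-periodic orbits of the perturbed flow. This is a direct application of the implicit function theorem at $\mu=0$, using the invertibility of the linearized time-$2\pi$ map minus the identity at each unperturbed saddle. Their local stable and unstable manifolds then depend smoothly on $\mu$ and are $\mu$-close to the unperturbed ones.

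Next I parametrize each unperturbed separatrix by a solution $\gamma(t)$ of $\dot\gamma=X_{h_\eps}(\gamma)$ connecting a pair of saddles as $t\to\mp\infty$. Modulo the $2\pi$-periodicity in $x$ (and, for $\tilde h_\eps$, also in $y$), there are only finitely many such separatrix classes to consider. For each such $\gamma$, the first-order obstruction, as $\mu\to 0$, to the coincidence of the stable and unstable manifolds of the perturbed periodic orbits near $\gamma(\pm\infty)$ is the Melnikov function
\begin{equation*}
  M_\gamma(t_0) \;=\; \eps \int_{-\infty}^{+\infty} \{h_\eps,f\}\bigl(\gamma(t),\,t+t_0\bigr)\,dt, \qquad t_0 \in \T,
\end{equation*}
whose convergence follows from the exponential approach of $\gamma(t)$ to its hyperbolic endpoints. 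By the classical Melnikov theorem, any simple zero of $M_\gamma$ persists, for all $\mu$ in some interval $(0,\mu_0)$, as a transversal homoclinic or heteroclinic intersection of the corresponding perturbed invariant manifolds.

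The remaining content is the genericity statement: I need to exhibit an open and dense set of smooth $f$'s for which every $M_\gamma$ admits a simple zero. Since $f\mapsto M_\gamma$ is a bounded linear functional and, upon Fourier-expanding $f$ in $(x,t)$ (and in $y$ for $\tilde h_\eps$), each Fourier mode contributes an explicit oscillatory integral along $\gamma$, the kernel $\{f:M_\gamma\equiv 0\}$ is a closed subspace of infinite codimension in $\mathcal{C}^\infty$; its complement is open and dense. Within it, a further open-dense transversality condition rules out degenerate zeros of the smooth $2\pi$-periodic function $M_\gamma$. Intersecting finitely many such conditions, one per separatrix class for both models, produces the desired generic set. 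The main obstacle is essentially this genericity argument: one must verify that not all Fourier-mode Melnikov integrals vanish simultaneously, which is routine here because the unperturbed separatrices of $h_\eps$ and $\tilde h_\eps$ are real-analytic non-constant curves, against which generic trigonometric perturbations integrate to non-zero values.
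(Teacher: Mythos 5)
Your overall route is the one the paper intends: the paper in fact gives no written proof of this lemma at all --- it is asserted as a consequence of classical Melnikov theory, and the only proof actually displayed in the text is that of Lemma~\ref{lemma:Melnikov2}. So your task was to supply the standard details, and most of what you write (persistence of the hyperbolic $2\pi$-periodic orbits by the implicit function theorem, the Poincar\'e--Melnikov integral along each of the finitely many separatrix classes, absolute convergence from the exponential approach of $\gamma$ to the saddles where $\{h_\eps,f\}$ vanishes, simple zeros persisting as transversal intersections) is correct and is exactly what the citation to Melnikov stands in for.

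There is, however, one genuine gap in your genericity step: you arrange that $M_\gamma\not\equiv 0$ and that all zeros of $M_\gamma$ are nondegenerate, but you never establish that $M_\gamma$ has a zero at all, and a nowhere-vanishing Melnikov function yields \emph{no} intersection rather than a transversal one. The missing observation is that the mean of $M_\gamma$ over $t_0\in\T$ is proportional to $\overline{f}(\gamma(-\infty))-\overline{f}(\gamma(+\infty))$, where $\overline{f}$ is the time average of $f$. Since separatrices lie in level sets of $h_\eps$ and the two families of saddles in \eqref{def:saddles} have energies $\eps/12$ and $1/6-\eps/12$ (respectively $1-\eps$ and $\eps-1$ for $\tilde h_\eps$), for $\eps\neq 1$ every separatrix is homoclinic to a single saddle of the cylinder (resp.\ torus); hence the mean vanishes, $M_\gamma\not\equiv 0$ forces a sign change, and your nondegeneracy condition then produces the required simple zero, closing the argument. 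But at $\eps=1$ --- which ``Fix $\eps>0$'' includes --- the separatrix net contains heteroclinic branches joining saddles with different $y$-components, the mean $\overline{f}(p_-)-\overline{f}(p_+)$ is generically \emph{nonzero}, and the set of $f$ for which such an $M_\gamma$ vanishes somewhere is open but not dense (one can make the constant part dominate the oscillation). So you should at minimum split the argument into the homoclinic and heteroclinic cases, record the zero-mean computation for the former, and for $\eps=1$ either defer to the quantitative setting of Lemma~\ref{lemma:Melnikov2} or impose the explicit open (not merely ``generic'') condition that the oscillation of each heteroclinic Melnikov function exceeds its mean.
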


In this statement, \textit{generic} actually means that $f$ may be chosen in an open and dense subset of the functional space (typically the space of analytic Hamiltonians, or of Hamiltonians of classe $C^r$ with $r$ large enough). See \cite{Chen:2022} for a (much more advanced) discussion in this direction.

Note that this result is significantly different for $\eps\neq 1$ and $\eps=1$ since the separatrices for $h_\eps$ are different in both cases. Moreover, since we are interested in $\eps$ close to 1 and depending on $\mu$, one can also prove the following more precise lemma, which is also a consequence of Melnikov Theory (\textit{ibid.}).

\begin{lemma}\label{lemma:Melnikov2}
  There exist $\de_0>0$ and $\mu_0>0$ small such that for a generic $f$ with $\|f\|_{\mathcal C^2}\leq 1$, for all $\mu\in (0,\mu_0)$ and $\eps\in [1-\de_0,1+\de_0]$ all the separatrices of the Hamiltonians $h_\eps$ and $  \tilde h_\eps$ break down and the corresponding invariant manifolds intersect transversally.
\end{lemma}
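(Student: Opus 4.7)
The plan is to apply Melnikov's method to the $\eps\mu f$ perturbation of the integrable Hamiltonian $h_\eps$ (and likewise for $\tilde h_\eps$), with uniform control in $\eps$ over the compact interval $[1-\de_0, 1+\de_0]$. First I would enumerate the separatrices of $h_\eps$: for $\eps \neq 1$ each saddle in \eqref{def:saddles} has a pair of homoclinic loops, while at $\eps = 1$ additional heteroclinic orbits between the two families of saddles appear by the reconnection described above. Each separatrix admits a smooth parametrization $\gamma_\eps(t) = (x_\eps(t), y_\eps(t))$ approaching its endpoint saddles exponentially as $t \to \pm\infty$; since the hyperbolicity rate at each saddle is $\sqrt{\eps/12}$, bounded away from zero on our interval, the implicit function theorem produces hyperbolic periodic orbits of $h_{\eps,\mu}$ at distance $O(\mu)$ from the saddles, with uniform estimates.

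For each separatrix $\gamma_\eps$, the first-order Melnikov function is
\begin{equation*}
  M_{\gamma_\eps}(t_0) = \int_{-\infty}^{+\infty} \{h_\eps, f\}(\gamma_\eps(t), t+t_0)\, dt,
\end{equation*}
which converges absolutely since the Poisson bracket vanishes at each saddle and $\gamma_\eps$ approaches them exponentially. By the classical Melnikov theorem, a simple zero of $M_{\gamma_\eps}$ corresponds, for $\mu$ small enough, to a transversal intersection of the stable and unstable manifolds of the perturbed periodic orbits attached to the endpoints of $\gamma_\eps$. The genericity statement then follows from the linearity and non-triviality of the map $f \mapsto M_{\gamma_\eps}$: for each fixed $\eps$ the set of $f$ with $\|f\|_{\mathcal C^2}\le 1$ for which $M_{\gamma_\eps}$ has only simple zeros is open and dense (indeed residual, by a Fourier-mode density argument in $f$). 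Uniformity in $\eps$ follows from compactness combined with the implicit function theorem: simple zeros persist under small parameter variations, so finitely many reference values of $\eps$ cover $[1-\de_0,1+\de_0]$, and the threshold $\mu_0$ can be taken uniform using compactness plus the uniform hyperbolicity estimates.

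The main difficulty is the heteroclinic reconnection at $\eps = 1$: the separatrix topology changes as $\eps$ crosses~$1$, and the homoclinic loops of $h_\eps$ for $\eps\neq 1$ converge, as $\eps\to 1$, to unions of two heteroclinic arcs of $h_1$, with the transit time near the opposite saddle growing unboundedly. I would handle this by taking $\de_0$ small but fixed and splitting the parameter interval into the three regimes $\eps \in [1-\de_0,1)$, $\eps = 1$, and $\eps\in (1, 1+\de_0]$, running a uniform Melnikov analysis in each regime with its own family of separatrices, rather than attempting to continue $\gamma_\eps$ analytically across the bifurcation. The bound $\|f\|_{\mathcal C^2}\le 1$ is what allows the Melnikov integrand to be controlled uniformly via the exponential decay at the saddles, and generic simple zeros in each of the three regimes then yield the uniform $\mu_0$ claimed.
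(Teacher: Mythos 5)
Your strategy --- fixing $\eps$, running Melnikov on the separatrices of $h_\eps$ itself, and then invoking compactness of $[1-\de_0,1+\de_0]$ to get a uniform $\mu_0$ --- has a genuine gap exactly at the point you flag. The family of unperturbed separatrices is not continuous at $\eps=1$: as $\eps\to 1^\pm$ the homoclinic loops of $h_\eps$ degenerate onto a chain of heteroclinic arcs of $h_1$, the passage time near the opposite saddle diverges, and the constants in the Melnikov machinery (the $O(\mu^2)$ remainder and the lower bound on the derivative of $M_{\gamma_\eps}$ at its simple zeros) are not uniform. Consequently the threshold $\mu_0(\eps)$ produced by the standard Melnikov theorem tends to $0$ as $\eps\to 1$, and ``finitely many reference values of $\eps$'' cannot cover a punctured neighbourhood of $\eps=1$: simple zeros persist under small parameter variations only while the separatrix family varies continuously, which is precisely what fails here. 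Splitting into the three regimes $[1-\de_0,1)$, $\{1\}$, $(1,1+\de_0]$ does not repair this, because within each half-open regime the degeneration still occurs at the endpoint; what you would actually obtain is the statement for each fixed $\eps$ with an $\eps$-dependent $\mu_0$, i.e.\ Lemma~\ref{lemma:Melnikov1}, not Lemma~\ref{lemma:Melnikov2}.

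The paper avoids the problem by a change of viewpoint rather than by a finer analysis: writing $\eps=1+\de$, it absorbs the detuning into the perturbation, regarding $h_{\eps,\mu}=\frac{y^2}{2}-\frac{y^3}{3}-\cos x+F$ with $F=-\de\cos x+\mu(1+\de)f=O(\de,\mu)$. Melnikov theory is then applied once, to the single reconnected system $h_1$, whose separatrices (the heteroclinic connections between saddles with different $y$-component) do not move with the parameters, so a single smallness condition on $(\de,\mu)$ suffices. This yields transversal intersections of the invariant manifolds of \emph{distinct} saddles; the Lambda lemma then upgrades these to transversal homoclinic intersections for each saddle --- a step your proposal also omits, since at and near $\eps=1$ plain Melnikov only gives heteroclinic transversality. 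If you wish to keep the ``perturb $h_\eps$ directly'' framework, you must either restrict to $|\eps-1|$ bounded below (where your argument is fine but proves less) or adopt the paper's reduction near $\eps=1$.
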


\begin{proof}
We prove the lemma for $h_{\eps,\mu}$ but the same proof applies to $  \tilde h_{\eps,\mu}$. Writing $\eps=1+\de$, $h_{\eps,\mu}$ becomes
\[
h_{\eps,\mu}(x,y,t)= \frac{y^2}{2}-\frac{y^3}{3}-\cos x + F(x,y,t)
\]
where
\[
F(x,y,t)=-\de \cos x+\mu(1+\de) f(x,y,t)
\]
satisfies $F\sim O(\de,\mu)$. Then, for $\de$ and $\mu$ small enough one can apply Melnikov Theory to prove that, for a generic $f$, the heteroclinic connections of the integrable Hamiltonian break down and that the stable and unstable invariant manifolds of the corresponding saddles intersect transversally. Note that Melnikov Theory only gives the transversality of the invariant manifolds which coincided for $\frac{y^2}{2}-\frac{y^3}{3}-\cos x$, that is invariant manifolds of different saddles. Nevertheless, the Lambda lemma \cite{PalisM82} implies that, for $\de$ and $\mu$ small enough, the stable and unstable invariant manifold of one saddle also intersect transversally giving rise to transversal homoclinic connections.
\end{proof}

A consequence of these lemmas is that, for $0<\eps<1+\de$ (dependent or independent of $\mu$) all homoclinic separatrix connections associated with the periodic orbits of $h_{\eps,\mu}$ and $  \tilde h_{\eps,\mu}$ split. 
Moreover, they imply (together with KAM Theory) the existence of two regimes: separation of resonances and overlapping, which are given by the existence (or absence) of heteroclinic connections between saddles with different $y$-component.

\begin{proof}[End of proof of proposition~\ref{prop:VerticalConnection}]
  Statement 1 is a direct consequence of KAM Theorem. Indeed, the
  Hamiltonians $h_{\eps}$ and $  \tilde h_{\eps}$ have invariant curves which
  ``separate the saddles'' with different $y$-component and are graphs
  over the base $y=0$. Moreover, except at one curve, $h_{\eps}$ and
  $  \tilde h_{\eps}$ are non-degenerate at those curves (the period of the
  periodic orbit is changing with its energy). Therefore, one can
  apply KAM Theorem for time-periodic Hamiltonians to show existence
  in the extended phase space of a positive measure set of 2
  dimensional invariant tori close to those periodic orbits of
  $h_{\eps}$ and $  \tilde h_{\eps}$.

 Statement 2 of this proposition is a direct consequence of Lemma \ref{lemma:Melnikov2}.
\end{proof}

Proposition \ref{prop:VerticalConnection} has several consequences. Constant $C_2>0$ refers to Proposition~\ref{prop:VerticalConnection}. 

\begin{corollary}
  For $\mu>0$ small enough and $\eps>1-C_2\mu$, both $h_{\eps,\mu}$ and $  \tilde h_{\eps,\mu}$ have a compact invariant subset carrying symbolic dynamics with random excursions in the $y$ direction, of amplitude uniform with respect to both $\mu$ and $\eps$.
\end{corollary}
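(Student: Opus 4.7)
The plan is to build the advertised invariant subset as a Smale--Moser-type horseshoe attached to the heteroclinic cycle already produced by Proposition~\ref{prop:VerticalConnection}. First I would single out, for $\mu$ small and $\eps$ in the range of its statement~2, two hyperbolic periodic orbits $P^-$ and $P^+$: the $\mu$-continuations of two neighbouring saddles of $h_\eps$ (respectively $\tilde h_\eps$) sitting on different horizontal lines --- for instance the saddles at $(\pi,0)$ and $(0,1)$ for $h_{\eps,\mu}$, and at $(0,0)$ and $(\pi,\pi)$ for $\tilde h_{\eps,\mu}$. Statement~2 of Proposition~\ref{prop:VerticalConnection} provides transversal heteroclinic intersections of $W^u(P^\pm)$ with $W^s(P^\mp)$ closing a heteroclinic cycle, and Lemma~\ref{lemma:Melnikov2} together with the $\lambda$-lemma argument recalled at the end of its proof provides, in addition, transversal homoclinic intersections of $W^u(P^\pm)$ with $W^s(P^\pm)$.

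The second step is the classical horseshoe construction for such a heteroclinic network. I would choose small Poincar\'e sections transverse to the flow at chosen heteroclinic points, then compose the local dynamics near $P^{\pm}$ with the ``transit maps'' along the heteroclinic connections to obtain well-defined return maps between sections. Thanks to the transversality, the strong $\lambda$-lemma of \cite{PalisM82} implies that these compositions contract and expand thin strips in exactly the way required by the Smale--Moser scheme. For any symbol sequence $\omega=(s_i)_{i\in\Z}\in\{-,+\}^{\Z}$, intersecting the forward and backward images of a suitable pair of rectangles produces a nonempty compact set of initial conditions whose orbit visits neighbourhoods of $P^{s_i}$ in the prescribed order. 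The resulting maximal invariant set $\Lambda$ is compact and hyperbolic, and the Poincar\'e return map on $\Lambda$ is semi-conjugate to the Bernoulli shift on two symbols, which is the advertised symbolic dynamics.

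Once $\Lambda$ is constructed, the excursion statement is immediate. The saddles $P^-$ and $P^+$ differ in the $y$-coordinate by a quantity bounded below by $1-\mathcal{O}(\mu)$ for $h_{\eps,\mu}$, and by $\pi-\mathcal{O}(\mu)$ for $\tilde h_{\eps,\mu}$, so every orbit of $\Lambda$ coded by a sequence that is not eventually constant oscillates in $y$ with amplitude bounded below by a positive constant that depends neither on $\mu$ nor on $\eps$; this is exactly the ``random excursions'' claim, the randomness being that of the Bernoulli factor.

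The step I expect to demand the most care is the quantitative control of the horseshoe as the parameters $(\eps,\mu)$ range in the open region $\{\,1-C_2\mu<\eps\leq 2,\ 0<\mu\leq \mu_0\,\}$. The Melnikov splitting between heteroclinic branches tends to $0$ as $\mu\to 0$; although the hyperbolicity rates of $P^\pm$ stay uniform, one must verify that, after a suitable normalisation of the Poincar\'e sections, the transversality constants of the heteroclinic intersections remain bounded below uniformly on this region, so that the Markov rectangles can be built with uniform size and the lower bound on excursion amplitude is genuinely parameter-independent. This is standard in the Melnikov regime provided $\mu_0$ is chosen small enough, but it is the only place where uniformity could conceivably be lost.
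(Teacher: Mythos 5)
Your proposal is correct and follows essentially the same route as the paper: the paper also builds the invariant set by the classical horseshoe/subshift construction attached to the transversal heteroclinic connections supplied by statement~2 of Proposition~\ref{prop:VerticalConnection}, with the uniform excursion amplitude coming from the fixed $y$-separation of the underlying saddles. The only cosmetic difference is that the paper concatenates connections among four saddles of $\tilde h_\eps$ to get a subshift on four symbols (noting it is not a full shift), whereas you use two saddles plus homoclinics to get a full two-shift; both are instances of the same standard argument.
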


Indeed, consider for example the four saddle points $(\pm \pi,\pm \pi)$ of $  \tilde h_\eps$. In the neighborhood of these equilibria, a classical construction leads to a subshift of four symbols by using the concatenation of heteroclinic connections through $(0,0)$, $(\pm 2\pi,0)$ and $(0,\pm2\pi)$. This subshift is not a full shift, since for instance one cannot go directly from (neighborhoods of) $(\pi,\pi)$ to $(-\pi,-\pi)$ without passing through neighborhoods of either the other two, but these obvious obstructions are the only obstructions. One actually gets subshifts of arbitrarily many symbols by considering neighborhoods of correspondingly many saddles. We refer for any classical book in Dynamical Systems for this construction, and for example to \cite{Brin:2002}.

In Regime 1 of Proposition \ref{prop:VerticalConnection}, one also certainly has symbolic dynamics but it is confined in the vertical direction by the KAM curves. 

The Lambda lemma \cite{PalisM82} implies the following for $  \tilde h_{\eps,\mu}$. 

\begin{corollary}
  Let $y_+>y_-$, $\mu>0$ small enough and $\eps>1-C_2\mu$. The Hamiltonian $  \tilde h_{\eps,\mu}$ has orbits which travel from $y=y_-$ to $y=y_+$. Moreover, one can achieve such transition in time $T\sim (y_+-y_-) \, |\ln\mu|$.
\end{corollary}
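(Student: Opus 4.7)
The plan is to construct the orbit as a shadow of an infinite chain of transversal heteroclinic connections that climb monotonically in the $y$--coordinate of the universal cover. Recall that on the cover $\T\times\R\times\T$ the saddle points of $\tilde h_\eps$ listed in~\eqref{def:saddles} form a lattice, and that by Proposition~\ref{prop:VerticalConnection}(2) combined with the $2\pi$--periodicity of $\tilde h_{\eps,\mu}$ in $y$, for every pair of consecutive saddles $p_k=(k\pi,k\pi)$ and $p_{k+1}=((k+1)\pi,(k+1)\pi)$ the invariant manifolds $W^u(p_k)$ and $W^s(p_{k+1})$ of the corresponding hyperbolic periodic orbits intersect transversally. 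Each such heteroclinic orbit raises the $y$--coordinate by exactly $\pi$.

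First I would fix, once and for all, a neighbourhood $U_k$ of radius $r$ (independent of $\mu$) around each saddle $p_k$ and a transversal section at a heteroclinic point between consecutive neighbourhoods. Then, starting from $k_-=\lfloor y_-/\pi\rfloor$ and stopping at $k_+=\lceil y_+/\pi\rceil$, apply the inclination (Lambda) lemma~\cite{PalisM82} iteratively: a small disk transversal to $W^s(p_{k_-})$ is mapped, under the time-$2\pi$ map, to disks that $C^1$--approach $W^u(p_{k_-})$, hence (by the transversal intersection with $W^s(p_{k_-+1})$) can be arranged to meet the entering cone at $p_{k_-+1}$, and so on. A standard intersection-of-nested-disks / graph-transform argument then yields a single initial condition whose orbit visits $U_{k_-},U_{k_-+1},\dots,U_{k_+}$ in order, thereby crossing the slab $y_-\leq y\leq y_+$.

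To get the time estimate, I would use that, by Melnikov theory (Lemma~\ref{lemma:Melnikov2}), the transversal splitting at each heteroclinic intersection is of order $\mu$. In a normal-form neighbourhood of each $p_k$ the time-$2\pi$ map linearises (up to smooth error) to the diffeomorphism with eigenvalues $e^{\pm\lambda}$, where $\lambda$ is bounded below uniformly for $\eps$ in the range considered. A point entering $U_k$ within transversal distance $O(\mu)$ of $W^u(p_k)$ must spend time of order $\lambda^{-1}\log(r/\mu)=O(|\ln\mu|)$ before exiting along the unstable direction and hitting the next transversal section. Every transition therefore costs $O(|\ln\mu|)$ units of time, and since the number of required transitions is $N\sim (y_+-y_-)/\pi$, the total travel time is $T=O\bigl((y_+-y_-)\,|\ln\mu|\bigr)$, as claimed.

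The main obstacle is making the last step quantitative rather than just qualitative: a soft application of the Lambda lemma only delivers the existence of shadowing orbits with no control on time. The key input is that the transversality scale of the heteroclinic intersections produced by Melnikov's method is precisely $\mu$, so that the $|\ln\mu|$ term is uniform across all the saddles in the chain (the constant being controlled by the lower bound on $\lambda$ and on the Melnikov splitting, both uniform in $\eps\in[1-\delta_0,1+\delta_0]$). Once this is established, concatenating the $N$ local estimates gives both the existence of the orbit and the announced time bound.
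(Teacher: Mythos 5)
Your argument is correct and follows exactly the route the paper intends: the paper's entire ``proof'' is the one-line remark that the Lambda lemma implies the corollary, and your chain of transversal heteroclinics between the saddles $(k\pi,k\pi)$, shadowed via iterated application of the inclination lemma, with the $O(|\ln\mu|)$ per-transition time coming from the $O(\mu)$ Melnikov splitting and the uniform lower bound on the saddle eigenvalues, is the standard way to make that remark precise. No discrepancy with the paper's approach; you have simply supplied the details it omits.
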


This behavior is not possible in Regime 1 due to KAM curves. Note that this behavior is still not possible for the Hamiltonian  $h_{\eps,\mu}$ for $\mu>0$ small enough and $\eps>1-C_2\mu$ since it has invariant curves surrounding the two overlapped resonances. 

\section{Numerics}

So-described instabilities easily show numerically; see Fig.~\ref{fig:1}. Despite the exponential divergence of solutions, approximate computations in the above two regimes are justified by the Lambda lemma, which entails that computed pseudo-orbits are shadowed by true
orbits of $  \tilde h_{\eps,\mu}$.

\begin{figure}[htbp]
  \centering
  \includegraphics[scale=0.3]{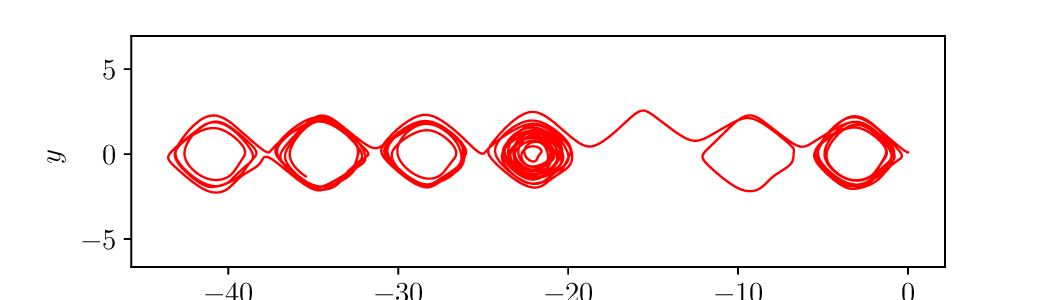}\\
  \includegraphics[scale=0.3]{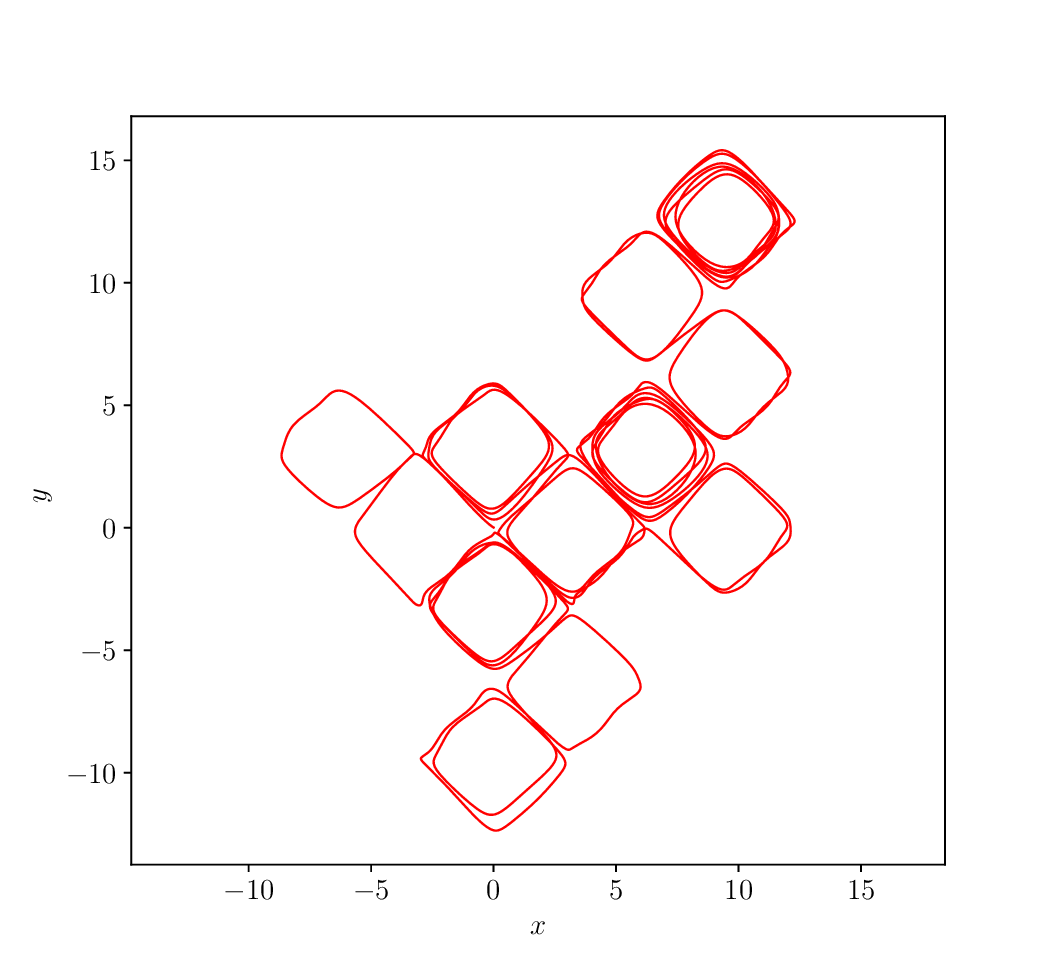} 
  \caption{Examples of unstable orbits of
    $h_{\eps,\mu}'(x,y,t) = \cos(y) - \eps\cos(x) + \mu \cos(x+2y+t)$, for $t \in [0,500]$.
    On the top, $\eps=0.8$, $\mu=0.1$, initial condition close to
    $(0,10^{-1})$. On the bottom, $\eps=0.99$, $\mu=0.1$, initial
    condition close to $(0,10^{-2})$.}
  \label{fig:1}
\end{figure}

\section*{Acknowledgements}

We have greatly benefitted from discussions with Cristel Chandre, Philip Morrison and Tere Seara, and from many suggestions of the referees. 

\section*{Funding}

M. Guardia has been supported by the European Research Council (ERC) under the European Union's Horizon 2020 research and innovation programme (grant agreement No. 757802). This work is part of the grant PID-2021-122954NB-100 funded by MCIN/AEI/10.13039/501100011033 and ``ERDF A way of making Europe''. M. Guardia is also supported by the Catalan Institution for Research and Advanced Studies via an ICREA Academia Prize 2019. This work is also supported by the Spanish State Research Agency, through the Severo Ochoa and María de Maeztu Program for Centers and Units of Excellence in R\&D (CEX2020-001084-M). This work is also supported by the project of the French Agence Nationale pour la Recherche CoSyDy (ANR-CE40-0014).

\section*{Conflict of interest}

The authors declare that they have no conflicts of interest.

\bibliographystyle{plain}
\bibliography{references} 

\end{document}